\newtheorem{theorem}{Theorem}[section]
\theoremstyle{plain}
\newtheorem{lemma}[theorem]{Lemma}
\theoremstyle{remark}
\numberwithin{equation}{section}
\newcommand{\dist}{\operatorname{dist}}
\newcommand{\diam}{\operatorname{diam}}
\newcommand{\calH}{{\mathcal H}}
\newcommand{\la}{\lambda}
\newtheorem*{prop*}{Theorem}
\newcommand{\vep}{\varepsilon}
\newcommand{\norm}[1]{\left\Vert #1 \right\Vert}
\newcommand{\abs}[1]{\left|#1 \right|}
\newcommand{\nvert}{n_V}
\newcommand{\nedge}{n_E}
\def\ins#1#2#3{\vbox to0pt{\kern-#2 \hbox{\kern#1 #3}\vss}\nointerlineskip}
\begin{document}

\title[Sharp diameter bound]{Sharp diameter bound on the spectral gap for quantum graphs}
\author[Borthwick]{David Borthwick}
\address{Department of Mathematics, Emory University, Atlanta, GA 30322}
\email{dborthw@emory.edu}
\author[Corsi]{Livia Corsi}
\address{Department of Mathematics, Emory University, Atlanta, GA 30322}
\email{livia.corsi@emory.edu}
\author[Jones]{Kenny Jones}
\address{Department of Mathematics, Emory University, Atlanta, GA 30322}
\email{wesley.kenderdine.jones@emory.edu}
\date{\today}

\begin{abstract}
We establish an upper bound on the spectral gap for compact quantum graphs which depends only on the diameter and total number of vertices. This bound is asymptotically sharp for pumpkin chains
with number of edges tending to infinity.
\end{abstract}

\maketitle

\section{Introduction}

A quantum graph $G$ is a metric graph equipped with a quantum Hamiltonian operator $L$ acting on the $L^2$ space of functions on the edges of $G$. For the main result of this paper, $L = -d^2/dx^2$, with standard Kirchoff-Neumann boundary conditions.
This means that $L$ is the self-adjoint operator associated to the quadratic form,
\[
\calH(f) :=  \norm{f'}^2,
\]
on $H^1(G)$.  The domain of $L$ consists of continuous functions on $G$ which are $H^2$ on each individual edge,
such that the sum of outgoing derivatives at each vertex vanishes.  See \cite{BK:2013} for background on quantum
graphs and self-adjoint vertex conditions.

We assume that $G$ is compact, meaning that it has a finite number of edges, all of finite length.  
This implies that the lowest eigenvalue $\lambda_0 = 0$ is simple.  The \emph{spectral gap} of $G$ is the lowest nonzero eigenvalue,
which we denote by $\lambda_1$.  By the min-max principle for the quadratic form $\calH$, 
we have Rayleigh quotient formula for the first nonzero eigenvalue
\begin{equation}\label{lam1.rayleigh}
\lambda_1 = \inf \left\{ \frac{\norm{f'}^2}{\norm{f}^2}:\>f \in H^1(G), \int_G f = 0 \right\}.
\end{equation}
The higher eigenvalues $\lambda_n$ are similarly obtained from variational formulas.

A variety of results have been proven which estimate the eigenvalues in terms of basic geometric properties of $G$.  For example,
Friedlander \cite{Friedlander:2005} proved the lower bound
\[
\lambda_n \ge \left( \frac{\pi (n-1)}{2\ell(G)} \right)^2,
\]
for a connected compact graph, where $\ell(G)$ is the total edge length of $G$.  The $n=1$ estimate is optimal
in the case of a single interval, and for $n \ge 2$ the estimate is optimal for an equilateral star graph with $n-1$ edges.  
The method of proof involves symmetrization after reducing the argument to trees, 
Independent proofs of the lower bound on $\lambda_1$ were given in \cite{Nicaise:1987, KN:2014}.

No upper bound for $\lambda_1$ is possible in terms of $\ell(G)$ alone. For instance if we consider a flower graph with fixed total length, 
as the number of edges $\nedge \to \infty$,
we see $\lambda_1 \to \infty$.   If $\nedge$ is fixed, then Kennedy et al.~\cite{KKMM:2016} proved that
\[
\lambda_1 \le \left( \frac{\pi \nedge}{\ell(G)} \right)^2.
\]
This bound is sharp in the case of flower graphs or equilateral pumpkins.  (A \emph{pumpkin}, also called a dipole graph,
consists a pair of vertices connected by a set of parallel edges.)  Additional bounds on the eigenvalues $\lambda_n$ can be
found in \cite{BG:2017, BK:2012, BKKM:2017, BKKM:2018, KKMM:2016, Kennedy:2018, KMN:2013}.

In this paper we focus on the issue of controlling $\lambda_1$ from above in terms of the number of vertices, $\nvert$, and 
the diameter,
\[
\diam(G) := \sup \{ \text{dist}(x,y):\>x,y\in G\},
\]
where the distance is the length of the shortest path connecting two points within the graph. 
Kennedy et al.~also showed it is not possible to bound $\lambda_1$ from above in terms of $\diam(G)$ alone. 
However, they provided the following upper bound for $\lambda_1$ in terms of $\diam(G)$ and the total number of vertices, 
$\nvert$ (\cite{KKMM:2016}--Thm.~6.1):  For $\nvert \geq 2$
\begin{equation}\label{lam1.kkmm}
\lambda_1 \le  \left(\frac{\pi(\nvert+1)}{\diam(G)}\right)^2.
\end{equation}
In terms of the combinatorial diameter $\diam_V(G)$, defined as the maximal distance between two vertices of the graph,
the estimate is improved to 
\begin{equation}\label{lam1.kkmm2}
\lambda_1 \le \left( \frac{\pi(\nvert-1)}{\diam_V(G)} \right)^2.
\end{equation}
The estimate \eqref{lam1.kkmm2} is clearly sharp in the case of a single interval, or equilateral pumpkin.
In a remark following the theorem, the authors of  \cite{KKMM:2016} observe that these estimates are not optimal in general,
and conjecture that the optimal bound will be sharp only in an asymptotic sense.

We will prove the following sharp version of the estimates \eqref{lam1.kkmm} and \eqref{lam1.kkmm2}:
\begin{theorem}\label{main.thm}
For a compact quantum graph $G$ with $\nvert \ge 2$ vertices and diameter $\diam(G)$, the first nonzero eigenvalue satisfies
\begin{equation}\label{main.bnd1}
\lambda_1 \le \left( \frac{\pi(\nvert+2)}{2\diam(G)} \right)^2.
\end{equation}
In terms of the combinatorial diameter,
\begin{equation}\label{main.bnd2}
\lambda_1 \le \left( \frac{\pi \nvert}{2\diam_V(G)} \right)^2.
\end{equation}
\end{theorem}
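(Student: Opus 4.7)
The plan is to first reduce the general-diameter bound \eqref{main.bnd1} to the combinatorial-diameter bound \eqref{main.bnd2}, then prove \eqref{main.bnd2} by a variational argument with a carefully chosen test function for \eqref{lam1.rayleigh}.

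For the reduction, suppose $p, q \in G$ realize the diameter $d(p,q) = D$. If either is not already a vertex, I introduce it as a new degree-two vertex, which amounts to subdividing the edge that contains it. This preserves both the quadratic form $\calH$ and the operator $L$, since the Kirchoff-Neumann condition at a degree-two vertex is merely continuity plus $C^{1}$-smoothness, and therefore the full spectrum is unchanged. The resulting graph $G'$ has at most $n_V + 2$ vertices and combinatorial diameter exactly $D$, so applying \eqref{main.bnd2} to $G'$ yields \eqref{main.bnd1} at once.

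For \eqref{main.bnd2}, fix vertices $p, q$ with $d(p,q) = D_V$ and set $\omega := \pi n_V/(2 D_V)$ and $R := \pi/(2\omega) = D_V/n_V$. Motivated by the asymptotic form of the first eigenfunction on extremal pumpkin chains (which, as the number of parallel edges per pumpkin tends to infinity, becomes supported on the ball of radius $D_V/n_V$ about $p$, where it equals a quarter cosine), I try the test function
\[
f(x) \;:=\; \cos\!\bigl( \omega \min\{\rho(x), R\} \bigr), \qquad \rho(x) := d(p, x),
\]
which is continuous since $\cos(\omega R) = 0$ and so vanishes outside the ball $\{\rho \le R\}$. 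Using the coarea-type identity
\[
\int_G \varphi(\rho(x))\, dx \;=\; \int_0^{D_V} \varphi(r)\, n(r)\, dr, \qquad n(r) := \#\{x \in G : \rho(x) = r\},
\]
the three quantities $\|f'\|^{2}$, $\|f\|^{2}$, and $\int_G f$ all reduce to one-variable integrals weighted by the level-set counting function $n(r)$, and the desired inequality $\|f'\|^{2} \le \omega^{2} \operatorname{Var}(f)$ becomes a Cauchy-Schwarz type estimate relating $\int_0^R \cos(\omega r)\, n(r)\, dr$ with $\int_0^R \cos(2\omega r)\, n(r)\, dr$ and the total length $\ell(G)$.

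The main obstacle I anticipate is that a single fixed truncation radius $R = D_V/n_V$ does not work uniformly across all graphs: while it captures the pumpkin extremizers asymptotically, on a graph like a simple interval the true extremal eigenfunction is the full un-truncated cosine, and the above $f$ produces a Rayleigh quotient strictly larger than $\omega^{2}$. I therefore expect the proof to employ an adaptive selection of $R$ from the finite collection of vertex-distances $\{\rho(v_j)\}_{j=1}^{n_V}$, combined with a pigeonhole or convexity argument showing that at least one such choice yields the required Rayleigh bound. This combinatorial selection over the $n_V$ candidate distances is most likely the technical heart of the proof, and is precisely what converts the factor $n_V - 1$ in Kennedy et al.'s estimate \eqref{lam1.kkmm2} into the sharpened $n_V/2$.
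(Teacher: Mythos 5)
Your step 1 reduction (subdividing edges to make the diameter-realizing points $p,q$ into degree-two vertices, which leaves the spectrum unchanged and increases $\nvert$ by at most $2$) is correct and is exactly how the paper passes from \eqref{main.bnd2} to \eqref{main.bnd1}.

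Step 2, however, has a genuine gap, and it is precisely the one you flag yourself: you construct a single radial test function $f(x)=\cos(\omega\min\{\rho(x),R\})$ with a \emph{fixed} truncation radius $R=D_V/\nvert$, observe correctly that this fails for graphs such as an interval, and then posit that some unspecified ``adaptive selection of $R$ from the finite collection of vertex-distances, combined with a pigeonhole or convexity argument'' will save the day. No such argument is supplied, and it is far from clear that one exists in the form you envision: the level-set counting function $n(r)$ of $\rho$ on a general graph is not monotone and can oscillate in ways that make a single distance-based truncation ineffective, which is exactly why the known proofs do not work directly on $G$. (A smaller issue: your coarea identity should integrate over $[0,\max_x \rho(x)]$, which can strictly exceed $D_V$, not $[0,D_V]$; and you never verify that the claimed inequality $\norm{f'}^2\le\omega^2\operatorname{Var}(f)$ reduces to anything tractable.)

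The paper takes a different route that circumvents this entirely. It first invokes the pumpkin-chain reduction of Kennedy--Kurasov--Malenov\'a--Mugnolo (Lemma~\ref{pchain.lemma}): by cutting pendants, shortening edges, and identifying vertices --- operations which never decrease $\lambda_1$ --- any compact graph $G$ is replaced by a pumpkin chain $G^*$ with the same diameter and no more vertices (up to the shift by $2$ in the metric-diameter case). On a pumpkin chain the problem becomes one-dimensional in the longitudinal coordinate, and the paper then uses \emph{two} explicit test functions: $\psi_1$ fits a half-wavelength cosine into the longest pumpkin (length $\ell_1$), giving $\lambda_1\le\pi^2/\ell_1^2$; $\psi_2$ fits quarter-wavelength cosines into the two longest pumpkins (length $\ell_2$ each), giving $\lambda_1\le\pi^2/(4\ell_2^2)$. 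The combinatorics you anticipated is then a single-line averaging inequality $(m-1)\ell_2\ge\ell-\ell_1$, which makes the minimum of the two bounds at most $((m+1)\pi/2\ell)^2$. Your proposal, as written, replaces a structured reduction plus two explicit test functions with one family of test functions and a hypothetical selection principle; the selection principle is the technical heart, and it is missing.
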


The proof makes use of the pumpkin-chain reduction argument introduced in \cite{KKMM:2016}.  A \emph{pumpkin chain}
is a graph consisting of a linear arrangement of equilateral pumpkins, joined at the vertices.  Each pumpkin in the chain can have
a different number of edges.  The diameter of a pumpkin chain is simply the distance between the endpoints of the chain.

\begin{figure}[ht] 
\centering 
\includegraphics[width=4in]{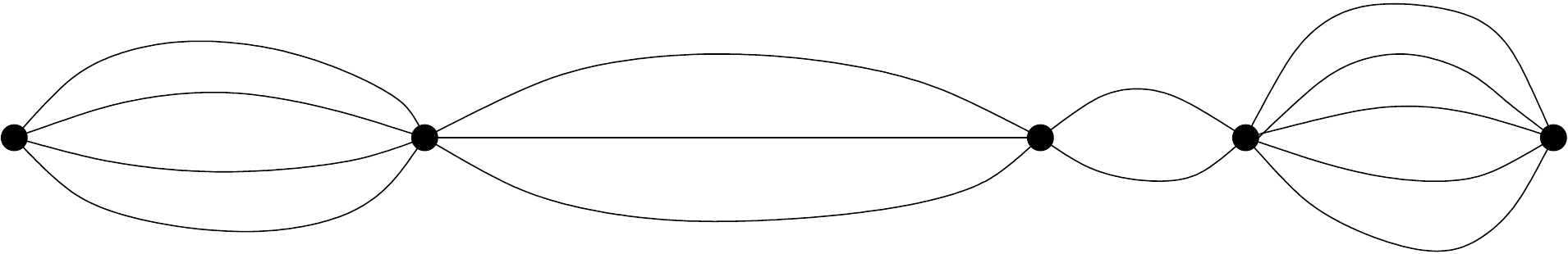} 
\vskip.2truecm 
\caption{A pumpkin chain. The edges of each pumpkin have the same length.} 
\label{fig:0} 
\end{figure} 

It was shown in \cite{KKMM:2016} that an arbitrary graph can be reduced to a pumpkin chain without decreasing
$\lambda_1$, while controlling $\diam(G)$ and $\nvert$.  This argument is reviewed in \S\ref{reduction.sec}.
The result \eqref{lam1.kkmm} was deduced from a test function constructed on the longest pumpkin in the chain.  
To prove Theorem~\ref{main.thm}, we prove an optimal bound on the spectral gap form a pumpkin chain in \S\ref{pchain.bnd.sec}.
The strategy is to optimize over a choice of two test functions, 
one constructed on the longest pumpkin and the other on the longest two pumpkins.

The estimate \eqref{main.bnd2} agrees with \eqref{lam1.kkmm2} when $\nvert = 2$, and is therefore sharp in the case of
an equilateral pumpkin, as noted above.
For $\nvert \ge 3$, we will demonstrate in \S\ref{sharp.sec} that \eqref{main.bnd2}  is sharp in an asymptotic sense.  
That is, for all $\nvert\ge 3$ and $\epsilon>0$ there exists a graph for which 
\[
\lambda_1 \ge \left( \frac{\pi(\nvert+1)}{2\diam(G)} \right)^2 - \epsilon.
\]
These examples are constructed from pumpkin chains with fixed $\nvert$ and $\diam(G)$, and $\nedge \to \infty$.
For a given $\nvert$, $\lambda_1$ is maximized when a single pumpkin has twice the length of the remaining pumpkins, which have uniform length. The location of the large pumpkin in the chain has no effect on the optimal bound.  

To understand the configuration of the optimal chain, let 
$a = \diam(G)/\nvert$ and consider a chain with $\nvert$ segments of length $a$ and one of length $2a$.  The bound
\eqref{main.bnd2} gives $\lambda_1 \le (\pi/2a)^2$, so an optimal eigenfunction has frequency close
to $\pi/2a$.  Therefore, in each pumpkin of length $a$, the eigenfunction nearly
passes through a quarter wavelength.  Assuming the eigenfunction is positive at the left endpoint and negative at the right, 
it will have components which are concave down on the left side and concave down on the right.  The single pumpkin of double length
is needed to contain the inflection point where the concavity changes sign.


\section{Reduction to pumpkin chains}\label{reduction.sec}

In this section we review the argument in \cite{KKMM:2016} on the reduction to pumpkin chains. 
As we shall see, in order to reduce to pumpkin-chains one needs to perform
operations on the graph, such as cutting pendants, shortening edges and identify vertices.
Thus it is clearly important to understand how such operations affect $\la_1$. The following is a well known result; see, for instance,
\cite{BK:2013,KKMM:2016,KMN:2013}.

\begin{lemma}\label{known}
Suppose that $G$ and $G'$ are connected, compact and finite quantum grahps. If $G'$ can be obtained from $G$ by either
cutting a pendant, shortening an edge or identifying two vertices, then $\la_1(G')\ge \la_1(G)$.
\end{lemma}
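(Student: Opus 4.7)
The plan is to apply the Rayleigh quotient characterization \eqref{lam1.rayleigh}: for each of the three operations I will lift a first nontrivial eigenfunction $u$ on $G'$ to an admissible test function $\tilde u$ on $G$ and show that its Rayleigh quotient is bounded above by $\lambda_1(G')$, so that $\lambda_1(G)\le\lambda_1(G')$ by the variational principle.

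Vertex identification is the cleanest case. The graphs $G$ and $G'$ have the same edges and edge lengths; functions in $H^1(G')$ correspond exactly to those $f\in H^1(G)$ which satisfy the extra continuity constraint $f(v_1)=f(v_2)$ at the pair of identified vertices. Hence $H^1(G')\hookrightarrow H^1(G)$ preserves both the $L^2$ norm and the derivative norm, and the mean-zero condition $\int f = 0$ transfers unchanged. The Rayleigh quotient for $G$ is then an infimum over a strictly larger class of test functions, and the inequality follows immediately.

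The pendant-cutting and edge-shortening cases share a common template. For pendant cutting, extend $u$ to $G$ by setting $\tilde u\equiv u(v)$ constantly on the reattached pendant, where $v$ is the attachment vertex. For edge shortening, parameterize the lengthened edge and insert a constant plateau of length equal to the extra length at some interior point, with constant value equal to $u$ at that point, so that $\tilde u$ still agrees with $u$ on two subintervals meeting the endpoints. In both cases $\tilde u$ is continuous on $G$, the inserted piece has zero derivative, so the Dirichlet numerator $\|\tilde u'\|^2$ is unchanged, while the $L^2$ denominator only grows.

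The main (mild) obstacle is that these naive extensions break the mean-zero constraint: the added plateau contributes $\ell_{\rm ext}\, u(v)$ to $\int_G \tilde u$, where $\ell_{\rm ext}$ is the length of the inserted piece. Subtracting the mean $m:=\ell_{\rm ext}\, u(v)/\ell(G)$ restores mean zero without altering the derivative, and a short computation gives
\[
\|\tilde u - m\|_G^2 - \|u\|_{G'}^2 \;=\; \ell_{\rm ext}\, u(v)^2\,\frac{\ell(G')}{\ell(G)} \;\ge\; 0.
\]
Thus the numerator of the Rayleigh quotient of $\tilde u - m$ on $G$ equals $\|u'\|_{G'}^2$ while the denominator is at least $\|u\|_{G'}^2$, yielding the required bound $\lambda_1(G)\le\lambda_1(G')$.
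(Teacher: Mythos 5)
The paper does not prove Lemma~\ref{known}; it states it as a well-known result and cites \cite{BK:2013,KKMM:2016,KMN:2013}, so there is no in-paper argument to compare against. Your proof is correct and is the standard variational argument behind those references: vertex identification imposes an extra continuity constraint, so $H^1(G')\hookrightarrow H^1(G)$ with the mean-zero condition preserved and the infimum in \eqref{lam1.rayleigh} can only decrease; for pendant cutting and edge lengthening you transplant the eigenfunction of $G'$ to $G$ by a constant extension that keeps the Dirichlet energy fixed while enlarging the $L^2$ norm, and you correctly restore the mean-zero constraint by subtracting the average, your identity $\|\tilde u - m\|_G^2 - \|u\|_{G'}^2 = \ell_{\rm ext}\,u(v)^2\,\ell(G')/\ell(G)\ge 0$ being a straightforward expansion. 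The argument is complete and self-contained.
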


The pumpkin chain algorithm which serves as the basis for the estimate of the spectral gap is described in the following:
\begin{lemma}[\cite{KKMM:2016}--Lemma~5.4]\label{pchain.lemma}
Given a compact, connected, non-empty metric graph $G$, there exists a pumpkin chain $G^*$ such that 

\begin{enumerate}

\item $\diam(G)=\diam(G^*)$, $\ell(G) \geq \ell(G^*)$, and $\nvert(G) \geq \nvert(G^*)-2$.

\item $\lambda_1(G) \leq \lambda_1(G^*)$.

\end{enumerate}
In the combinatorial diameter is used, (1) is replaced by
\begin{itemize}
\item[($1'$)]  $\diam_V(G)=\diam_V(G^*)$, $\ell(G) \geq \ell(G^*)$, and $\nvert(G) \geq \nvert(G^*)$.
\end{itemize}
\end{lemma}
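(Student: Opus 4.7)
The plan is to reduce $G$ step by step to a pumpkin chain using only the three operations of Lemma~\ref{known}, each of which does not decrease $\lambda_1$. Since those operations also do not increase total edge length, the bound $\ell(G) \geq \ell(G^*)$ will come for free, and both the vertex count and the diameter can be tracked along the way.

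First I would fix endpoints $x, y \in G$ realizing $\diam(G) = \dist(x,y)$, together with a geodesic path $P$ from $x$ to $y$. In the version (1), if $x$ or $y$ fail to be vertices of $G$, I promote them to degree-$2$ vertices; this accounts for the slack $\nvert(G^*) - \nvert(G) \leq 2$. In the combinatorial version $(1')$, I instead pick $x, y$ among the existing vertices realizing $\diam_V(G)$, so no new vertices are introduced. Let $v_0 = x, v_1, \ldots, v_N = y$ be the $G$-vertices encountered along $P$ in order, with $P$-edges $e_i$ of length $\ell_i$.

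Next I would strip away everything not belonging to a cycle through $P$. Any edge of $G$ meeting the interior of some $e_i$ can be pushed to the vertex set by the shortening operation. After that, any maximal tree hanging off $P$ at a single vertex $v_k$ is removed by iterated pendant cuts. This leaves a graph in which every point lies on a simple cycle passing through a segment of $P$, and every remaining chord $\gamma$ of $P$ has both endpoints in $\{v_0,\ldots,v_N\}$.

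The core of the argument is converting the remaining chords into pumpkin structure. A chord $\gamma$ from $v_i$ to $v_j$ with $i < j$ must have length at least $\sum_{k=i+1}^j \ell_k$, since otherwise $P$ would fail to be a shortest path from $x$ to $y$. If $j = i+1$, the chord is already a parallel edge of the pumpkin between $v_i$ and $v_{i+1}$. If $j > i+1$, I split $\gamma$ by identifying interior points of $\gamma$, chosen so that consecutive pieces have lengths $\ell_{i+1}, \ldots, \ell_j$ (with any excess length absorbed by shortening), with the intermediate vertices $v_{i+1}, \ldots, v_{j-1}$. Each identification and shortening preserves the geodesic status of $P$, so $\diam$ is unchanged. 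Iterating, the graph becomes a pumpkin chain $G^*$ on vertex set $\{v_0,\ldots,v_N\}$, and the conclusions of (1), $(1')$, and (2) all follow from the cumulative effect of the operations.

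The hard part is the chord-splitting step: one must distribute a chord of a priori unknown geometry across several pumpkin segments using only the three permitted operations, while simultaneously preserving the distance $\dist(v_0, v_N)$ and leaving no loose ends between pumpkins. The key observation that makes it go through is exactly the length inequality $|\gamma| \geq \sum_{k=i+1}^j \ell_k$ forced by $P$ being a geodesic, which provides precisely enough slack for identification-plus-shortening to succeed at each intermediate vertex.
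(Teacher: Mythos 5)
Your framework is right—reduce via the three operations of Lemma~\ref{known}, and track diameter, vertex count, and length—but the structural claim at the heart of your reduction is not justified and is in fact false in general. You assert that after pushing mid-edge attachments to vertices and deleting hanging trees, ``every remaining chord $\gamma$ of $P$ has both endpoints in $\{v_0,\ldots,v_N\}$.'' Consider $P$ from $v_0$ to $v_D$ together with two disjoint parallel paths $Q_1, Q_2$ from $v_0$ to $v_D$ and a cross-chord $c$ joining an interior point of $Q_1$ to an interior point of $Q_2$. The chord $c$ is not a pendant (deleting it leaves the graph connected), lies on a cycle through $P$, and has neither endpoint on $P$; your cleanup steps leave it in place with no way to absorb it into a pumpkin. (A related, smaller omission: not every piece attached at a single vertex is a \emph{tree}; pendant cycles survive a tree-removal pass and must be handled as pendants in the graph-theoretic sense.)

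The paper avoids this by not privileging a single geodesic $P$. Instead it collects the family $\Gamma_1, \ldots, \Gamma_n$ of \emph{all} non-self-intersecting $v_0$-to-$v_D$ paths not contained in the union of the previous ones, sets $G' = \bigcup_i \Gamma_i$, and observes that everything in $G \setminus G'$ is a pendant (otherwise it would contribute a new path, contradicting the exhaustion). Each $\Gamma_i$ is then shortened to length exactly $D$, artificial degree-two vertices are inserted so that all paths carry vertices at the same parameter levels, and all vertices at the same level are identified. In the cross-chord example above, some $\Gamma_i$ passes through $c$; shortening that path shrinks $c$ to a point, and the subsequent identification merges the resulting vertex onto the correct level. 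Your observation that $P$ being a geodesic forces any $v_i$-to-$v_j$ chord to have length at least $\sum_{k=i+1}^j \ell_k$ is correct and is exactly the slack that makes the paper's shortening step well-defined, but the path-by-path bookkeeping (rather than chord-by-chord surgery on $P$ alone) is what makes the argument go through on graphs with nontrivial topology away from $P$.
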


\begin{proof}
The pumpkin chain $G^*$ is produced by the following algorithm, where all steps either increase $\lambda_1$ or leave it unchanged.

\begin{itemize}

\item[Step 1.] Let $D := \diam(G)$, and choose two points $x,\,y \,\in G$ such that $\dist(x,y)=D$. If $x$ and $y$ are not vertices, 
we can insert artificial vertices of degree two at $x$ and $y$.  This yields the shift of $\nvert$ by $2$ in statement (1).
If $x$ and $y$ are assumed to be vertices, then no change in $\nvert$ is required and (1) is modified to ($1'$).
For clarity we will rename $x$ and $y$ as the vertices $v_0$ and $v_D$ respectively. 
 
 \item[Step 2.]
Choose a path $\Gamma_1$ of minimal distance between $v_0$ and $v_D$, so that $\ell(\Gamma_1)=D$. 
Note that, since $\Gamma_1$ is the shortest path connecting $v_0$ and $v_D$, 
it contains no loops and does not cross any point twice (vertex or edge).  In the example shown in Figure~\ref{fig1},
$\Gamma_1$ is the central path. 

\begin{figure}[ht] 
\centering 
\ins{112pt}{-60pt}{$v_0$} 
\ins{277pt}{-60pt}{$v_D$} 
\includegraphics[width=2.1in]{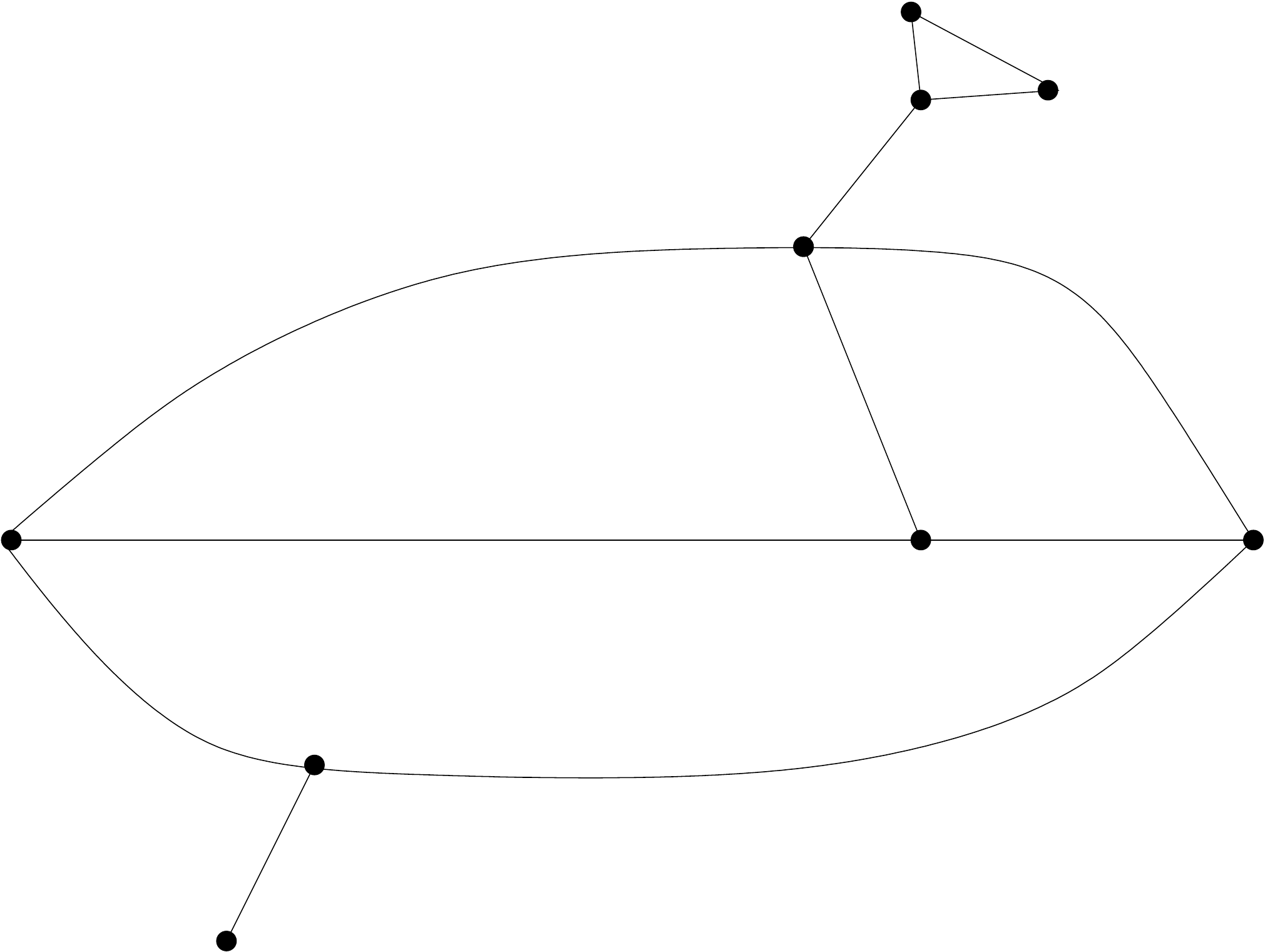} 
\vskip.2truecm 
\caption{The graph $G$.} 
\label{fig1} 
\end{figure} 

 \item[Step 3.] Find the second shortest path $\Gamma_2$, such that $\Gamma_2$ does not cross any point $x \in G$ twice and $\Gamma_2 \nsubseteq \Gamma_1$. 
 If two or more paths have the same length and satisfy the above conditions, choose one arbitrarily. If such a path does not exist, skip to Step 5.
 
 \item[Step 4.] Continue to find the next shortest path connecting $v_0$ to $v_D$ such that no point is crossed twice and 
\[\Gamma_k \nsubseteq \bigcup_{i=1}^{k-1}\Gamma_i\]
for each $k$.
By compactness of $G$, this process must terminate at some $\Gamma_n$
with 
\[
D=\ell(\Gamma_1) \leq \ell(\Gamma_2) \leq \dots \leq \ell(\Gamma_{n-1}) \leq \ell(\Gamma_n).
\]

\item[Step 5.]
Let $G' = \bigcup_{i=1}^{n} \Gamma_i$. We claim that any connected component of $G\backslash G'$ must be attached to
$G'$ by a single vertex (i.e., is a pendant of $G'$). Indeed, if that were not the case
 we could find a non-self intersecting path connecting $v_0$ to $v_D$ that intersects $G\backslash G'$ and hence is not a subset of $G'$.
However, we exhausted all such paths in Step 4.   
We can therefore pass from $G$ to $G'$ by removing all pendants attached to $G'$, see Figure \ref{fig2}. 

\begin{figure}[ht] 
\centering 
\ins{112pt}{-35pt}{$v_0$} 
\ins{277pt}{-35pt}{$v_D$} 
\includegraphics[width=2.1in]{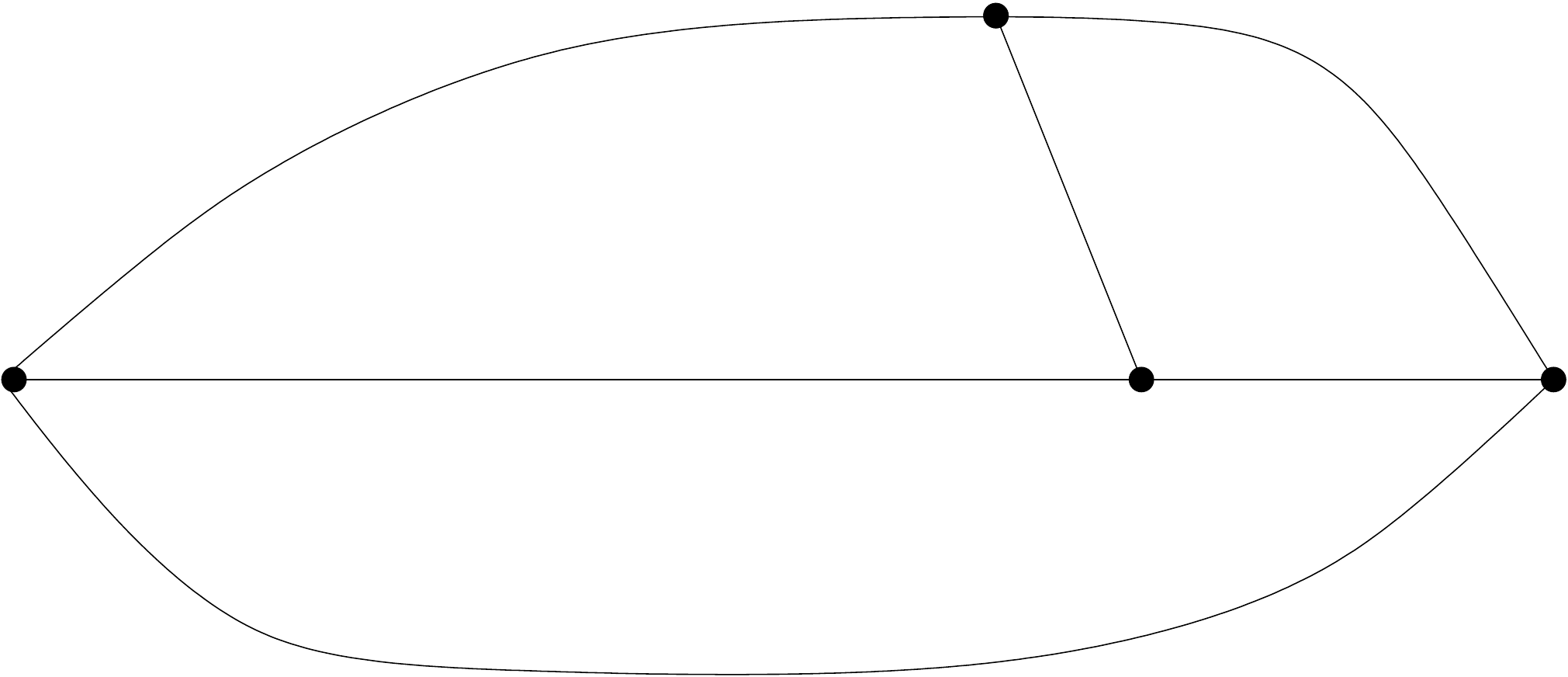} 
\vskip.2truecm 
\caption{The graph $G'$, created by removing pendants from $G$. } 
\label{fig2} 
\end{figure} 

\item[Step 6.] We  now construct a new graph $G''$ by altering $G'$ so that all paths connecting $v_0$ to $v_D$ have equal length $D$.   We start by
setting $\Gamma_1'' = \Gamma_1$.  Then, in order to shorten $\Gamma_2$ without changing $\Gamma_1''$, we reduce
$\Gamma_2 \backslash \Gamma_1$ to create a new path $\Gamma_2''$ with $\ell(\Gamma_2'')=D$. 
Proceeding by induction, we shorten each path $\Gamma_i$ to create $\Gamma_i''$ such that $\ell(\Gamma_i'')=D$, 
by only decreasing the length of $\Gamma_i\backslash \cup_{j=1}^{i-1} \Gamma_j$, until $\ell(\Gamma_i'')=D$. 
Note that some paths may have become subsets of others, leaving us $m$ distinct paths with $m \leq n$.
We set $G'' = \cup_{i=1}^m \Gamma_i''$.

\item[Step 7.]
For each path $\Gamma_i''$ in $G''$, define parametrizations 
$f_i:\Gamma_i'' \rightarrow$ $[0,D]$ with $f_i(v_0)=0$ and $f_k(v_D)=D$.
For each vertex $v_j$ in $\Gamma_1''$, we insure that $f_i^{-1}(f_1(v_j))$ is a vertex of $\Gamma_i''$ for $i > 1$
by inserting artificial vertices of degree two if needed. 
This process is repeated for $i=2,\dots,m$, so that all paths $\Gamma_i''$ have vertices corresponding to the same set of points in $[0,D]$.

\begin{figure}[ht] 
\centering 
\ins{112pt}{-35pt}{$v_0$} 
\ins{180pt}{-67pt}{$e_1$} 
\ins{250pt}{-58pt}{$e_4$} 
\ins{180pt}{-30pt}{$e_2$} 
\ins{250pt}{-30pt}{$e_5$} 
\ins{180pt}{4pt}{$e_3$} 
\ins{215pt}{7pt}{$v_2$} 
\ins{250pt}{2pt}{$e_6$} 
\ins{230pt}{-65pt}{$v_1$} 
\ins{230pt}{-40pt}{$v_3$} 
\ins{228pt}{-18pt}{$e_7$} 
\ins{277pt}{-35pt}{$v_D$} 
\includegraphics[width=2.1in]{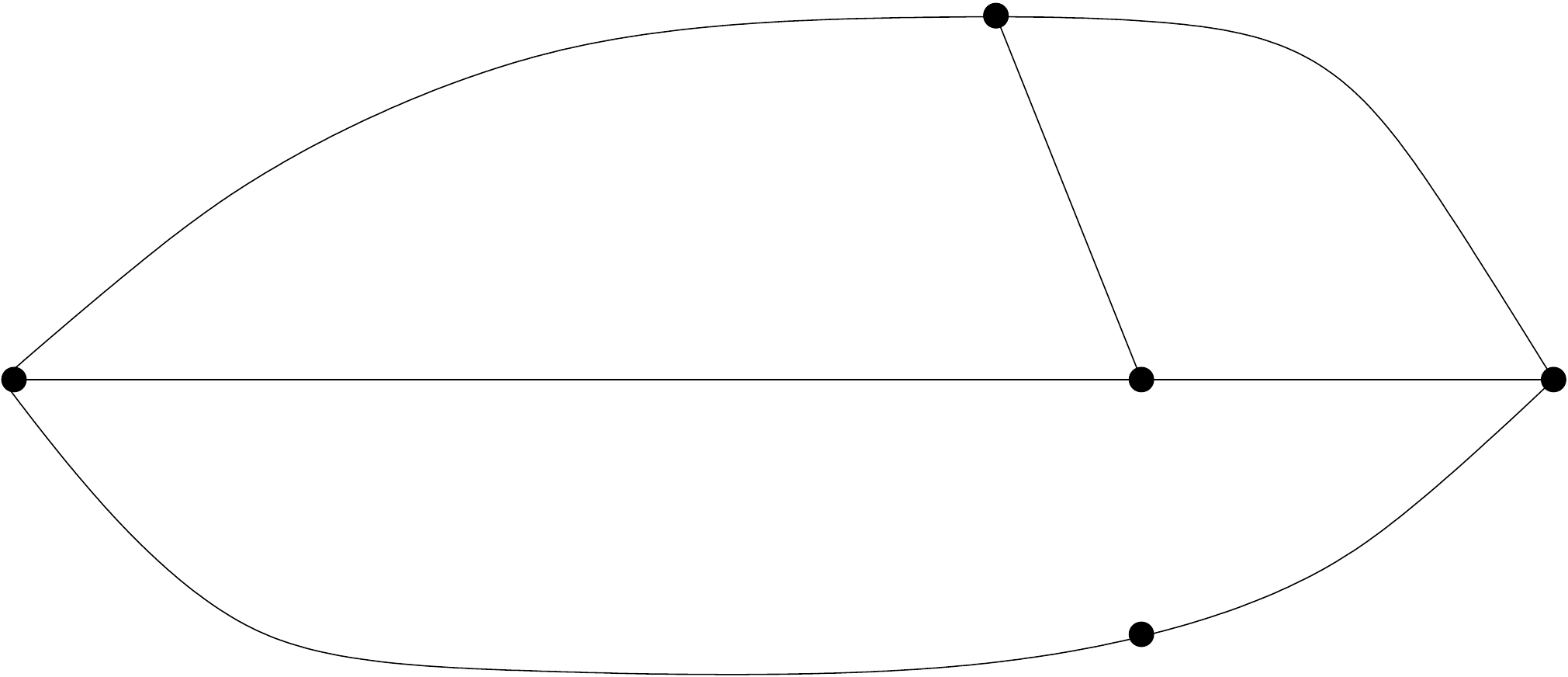} 
\vskip.2truecm 
\caption{The graph $G''$. Compared to Figure \ref{fig2}, the artificial vertex $v_1$ has been added, and the sets of
edges $\{e_1, e_2, e_3\}$ and $\{e_4, e_5, e_6\}$ have been altered so that their elements have the same lengths.} 
\label{fig3} 
\end{figure} 

\item[Step 8.] The final step is to identify all vertices with the same level under $f$. 
That is, for each $x_j \in [0,D]$ corresponding to a vertex, the points $f_i^{-1}(x_j)$ for $i = 1,\dots, m$
are identified in $G''$.  This leaves the pumpkin chain $G^*$.

\begin{figure}[ht] 
\centering 
\ins{112pt}{-31pt}{$v_0$} 
\ins{180pt}{-59pt}{$e_1$} 
\ins{252pt}{-47pt}{$e_4$} 
\ins{180pt}{-26pt}{$e_2$} 
\ins{252pt}{-26pt}{$e_5$} 
\ins{180pt}{7pt}{$e_3$} 
\ins{252pt}{-8pt}{$e_6$} 
\ins{230pt}{-40pt}{$v_3$} 
\ins{277pt}{-31pt}{$v_D$} 
\includegraphics[width=2.1in]{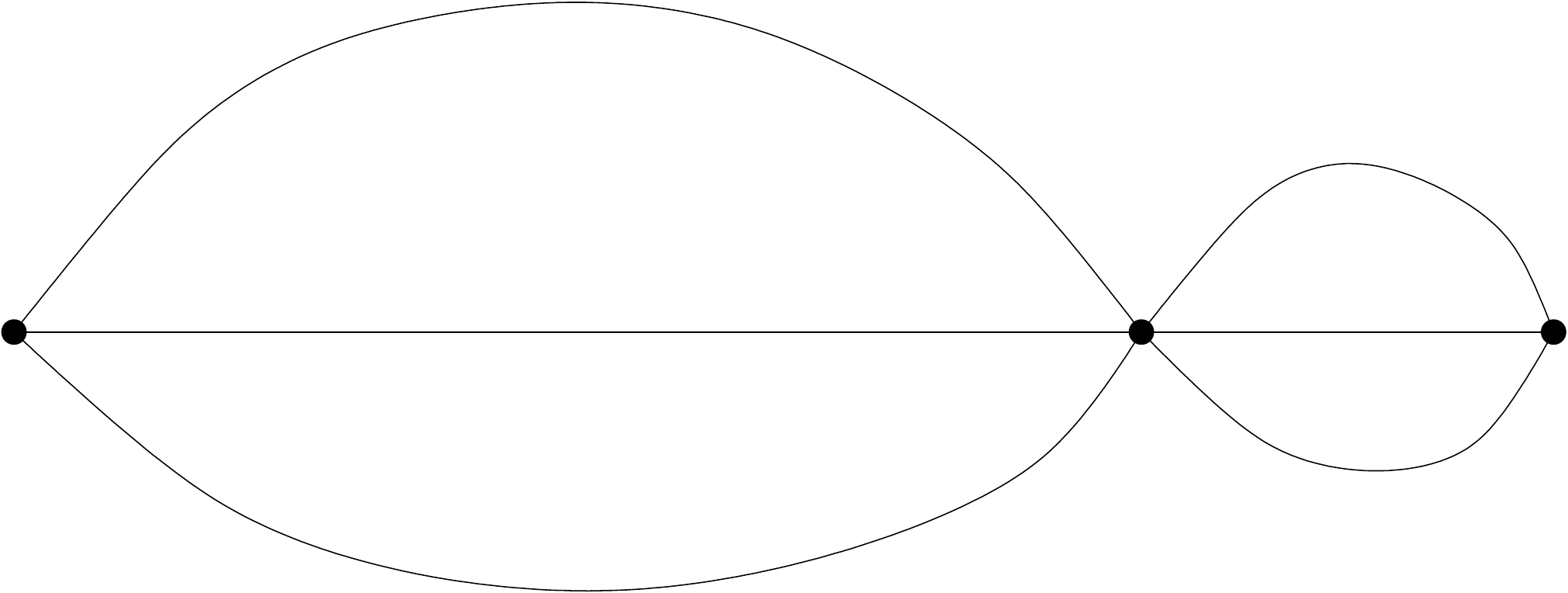} 
\vskip.2truecm 
\caption{The pumpkin chain $G^*$.  Compared to Figure \ref{fig3}, the vertices $v_1$, $v_2$, and $v_3$ have been identified,
and the edge $e_7$ has shrunk to a point.} 
\label{fig4} 
\end{figure} 

\end{itemize}

The construction of $G^*$ involves cutting pendants, shortening edge lengths, and identifying vertices. From Lemma \ref{known} we know that
these operations at most increase the first eigenvalue.  Therefore, we have $\lambda_1(G) \leq \lambda_1(G^*)$. 
\end{proof}

Note that the algorithm used in Lemma~\ref{pchain.lemma} does not claim uniqueness.  
In fact many different pumpkin chains might be achievable from the same original graph.

\section{Spectral gap for pumpkin chains}\label{pchain.bnd.sec}

In this section we will establish the optimal upper bound on the spectral gap for pumpkin chains.  
Let $G$ be a pumpkin chain of total length $\ell$, with $m$ pumpkins.
We use the coordinate $x \in [0,\ell]$ to indicate the longitudinal position on the chain (taking the 
same values on each edge within the same pumpkin).

The calculation of the first eigenfunction for a pumpkin chain reduces to a one-dimensional 
problem.  Let $I_j$ be the open subinterval of $[0,\ell]$ corresponding to pumpkin $j$, 
and let $k_j$ denote its multiplicity.  We define the edge weight function
\begin{equation}\label{edge.wt}
\rho(x) := k_j\quad \text{for }x \in I_j.
\end{equation}
It was shown in \cite{KKMM:2016}--Lemma 5.6 that we can assume that the eigenfunction corresponding to $\lambda_1$ is a function of $x$ alone.
This implies that the Rayleigh quotient formula \eqref{lam1.rayleigh} reduces to
\begin{equation}\label{pchain.rayleigh}
\lambda_1(G) = \inf \left\{ \frac{\int_0^\ell \abs{f'}^2\rho\>dx}{\int_0^\ell \abs{f}^2\rho\>dx}:\>f \in H^1(0,\ell), \int_0^\ell f \rho \>dx = 0 \right\}.
\end{equation}
 
Since $\rho$ is piecewise constant, the minimizing function $\phi_1$ for \eqref{pchain.rayleigh} will 
take the form $b_j \cos (\sigma x + \alpha_j)$ in each segment, where $\sigma^2 = \lambda_1$
and $\alpha_j$ is a constant phase shift.  Furthermore, $\phi_1$ will satisfy Neumann conditions
at the endpoints $0$ and $\ell$, and the vertex conditions reduce to the statement that 
$\rho \phi_1'$ is continuous on $[0, \ell]$.  

To motivate the upper bound, let us first consider the case $m=2$.  
Let $x \in [0,\ell]$ the longitudinal coordinate, with the first pumpkin corresponding to $[0,\ell_1]$.
Let $k_1$, $k_2$ be the corresponding numbers of edges in each pumpkin.  
We seek an eigenfunction of the form
\[
\phi(x) = \begin{cases} b_1 \cos (\sigma x), &x \in [0,\ell_1], \\
b_2 \cos (\sigma (\ell-x)), & x\in [\ell_1,\ell], \end{cases}
\]
where $\lambda = \sigma^2$.
The coefficients $b_j$ are chosen to satisfy the continuity and vertex conditions
\begin{equation}\label{bj.eqs}
\begin{split}
b_1 \cos (\sigma \ell_1) &= b_2 \cos (\sigma(\ell-\ell_1)), \\
b_1 k_1 \sigma \sin (\sigma \ell_1) &= b_2 k_2 \sigma \sin (\sigma (\ell-\ell_1)).
\end{split}
\end{equation}
As a linear system for the coefficients $(b_1,b_2)$, \eqref{bj.eqs}
admits a nontrivial solution for if and only if 
\begin{equation}\label{m2.root}
k_1 \sin (\sigma \ell_1)  \cos (\sigma(\ell-\ell_1)) - k_2 \cos (\sigma \ell_1) \sin (\sigma (\ell-\ell_1)) = 0.
\end{equation}
The value of $\lambda_1$ is determined by the smallest positive root $\sigma_1$ of \eqref{m2.root}.

If $k_1$ and $k_2$ are roughly equal, then the first root of \eqref{m2.root} occurs when $\sigma \approx \pi/\ell$.
To maximize $\lambda_1$, we can assume that $k_1$ is much larger than $k_2$.  
In this case, the roots of \eqref{m2.root} will lie close to the roots of the first term, $\sin (\sigma \ell_1)  \cos (\sigma(\ell-\ell_1))$.  In particular
the first nonzero root $\sigma_1$ satisfies
\[
\sigma_1 \approx \min\left\{\frac{\pi}{\ell_1},  \frac{\pi}{2(\ell-\ell_1)}\right\}.
\]
The maximum on the right-hand side occurs when $\ell_1 = 2\ell/3$, suggesting that the maximum value of 
$\sigma_1$ is $3\pi/(2\ell)$ when $m=2$. 
 
\begin{theorem}\label{pchain.bnd.thm}
Let $G$ be a pumpkin chain of total length $\ell$, with $m$ pumpkins ($m+1$ vertices).
Then for $m \ge 2$,
\[
\lambda_1(G) \le \frac{(m+1)^2\pi^2}{4\ell^2}.
\]
\end{theorem}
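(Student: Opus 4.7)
The plan is to use the weighted one-dimensional Rayleigh quotient \eqref{pchain.rayleigh} together with two families of test functions—one concentrated on the longest pumpkin, and one on any two pumpkins of length at least $\ell/(m+1)$—combined by a pigeonhole dichotomy on $L_{\max}:=\max_j\ell_j$. I would first establish the dichotomy: either (i) $L_{\max}\ge 2\ell/(m+1)$, or (ii) at least two distinct pumpkins $P_a,P_b$ satisfy $\ell_a,\ell_b\ge\ell/(m+1)$. If both fail, then all $\ell_j<2\ell/(m+1)$ and at most one $\ell_j\ge\ell/(m+1)$, so $\ell=\sum_j\ell_j<2\ell/(m+1)+(m-1)\ell/(m+1)=\ell$, a contradiction.

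In case (i), let $P_{j_0}$ realize $L_{\max}$, with vertices $v_{j_0-1},v_{j_0}$, and take
\[
f_1(x)=\begin{cases}1,&x\le v_{j_0-1},\\ \cos\!\bigl(\pi(x-v_{j_0-1})/L_{\max}\bigr),& x\in P_{j_0},\\ -1,& x\ge v_{j_0}.\end{cases}
\]
Writing $p,q$ for the $\rho$-weighted lengths on the left and right of $P_{j_0}$, direct computation gives $\int f_1\rho=p-q$, $\int f_1^2\rho=p+q+k_{j_0}L_{\max}/2$, and $\int(f_1')^2\rho=k_{j_0}\pi^2/(2L_{\max})$. After subtracting the mean, a short monotonicity check shows the denominator $p+q+k_{j_0}L_{\max}/2-(p-q)^2/(p+q+k_{j_0}L_{\max})$ attains its minimum $k_{j_0}L_{\max}/2$ at $p=q=0$, so $\lambda_1\le\pi^2/L_{\max}^2\le(m+1)^2\pi^2/(4\ell^2)$.

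In case (ii), for $P_a,P_b$ with $a<b$ (possibly with middle pumpkins $P_{a+1},\dots,P_{b-1}$) and outside weights $p,q$, set $P=p+2k_a\ell_a/\pi$ and $Q=q+2k_b\ell_b/\pi$. Take
\[
f_2(x)=\begin{cases}Q,&x\le v_{a-1},\\ Q\sin\!\bigl(\pi(v_a-x)/(2\ell_a)\bigr),& x\in P_a,\\ 0,& v_a\le x\le v_{b-1},\\ -P\sin\!\bigl(\pi(x-v_{b-1})/(2\ell_b)\bigr),& x\in P_b,\\ -P,& x\ge v_b,\end{cases}
\]
which is continuous and in $H^1$. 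The amplitudes are arranged so that $\int f_2\rho=QP-PQ=0$ automatically. A computation yields $\int f_2^2\rho=Q^2(p+k_a\ell_a/2)+P^2(q+k_b\ell_b/2)$ and $\int(f_2')^2\rho=(\pi^2/8)(Q^2k_a/\ell_a+P^2k_b/\ell_b)$, so the Rayleigh quotient is a convex combination of the two local ratios $R_a=\pi^2k_a/(4\ell_a(2p+k_a\ell_a))$ and $R_b=\pi^2k_b/(4\ell_b(2q+k_b\ell_b))$. Since $p,q\ge 0$, one has $R_a\le\pi^2/(4\ell_a^2)$ and $R_b\le\pi^2/(4\ell_b^2)$, giving $\lambda_1\le\pi^2/(4\min(\ell_a,\ell_b)^2)\le(m+1)^2\pi^2/(4\ell^2)$.

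The main obstacle I anticipate is the choice of amplitudes $a_1=Q$, $a_2=P$ in case (ii): this single move simultaneously enforces the mean-zero condition and decouples the pair estimate into a convex combination of essentially single-pumpkin ratios. A naive half-wave across an adjacent pair, as used in \cite{KKMM:2016}, fails precisely when the multiplicities $k_a,k_b$ become imbalanced; the quarter-wave construction with independent amplitudes on each pumpkin is what delivers a bound uniform in the weight data, and verifying this convex-combination reduction is where the bulk of the algebraic work lies.
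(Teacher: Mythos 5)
Your proof is correct and uses essentially the same two test functions as the paper: a half-wave of cosine concentrated on a single long pumpkin, and a pair of quarter-waves on two pumpkins with a flat zero segment between them. The organization differs slightly --- you open with a pigeonhole dichotomy on $L_{\max}$ where the paper instead derives $\lambda_1 \le \pi^2\min\{\ell_1^{-2},\,(m-1)^2/(4(\ell-\ell_1)^2)\}$ and optimizes over $\ell_1$ at the end, but these are logically equivalent --- and your choice of amplitudes $a_1=Q$, $a_2=P$ to enforce $\int f_2\rho=0$ automatically is a tidy refinement of the paper's ``choose $b_1,b_2$ for orthogonality'' step, turning the bound into a transparent mediant inequality.
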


\begin{proof}
Let us label the pumpkins $1, \dots, m$ so that the lengths $\ell_1,\dots,\ell_m$ are arranged in decreasing order.
For pumpkin $j$ let $k_j$ be the number of edges and $x_j$ the starting location.  

We first define a test function as in \cite{KKMM:2016}--Thm 6.1, by fitting half a wavelength of a cosine function into the longest pumpkin,
\[
\psi_1(x) := \begin{cases} b_1, & x \le x_1, \\
b_1 \cos [\pi(x-x_1)/\ell_1], & x_1\le x \le x_1 + \ell_1/2, \\
b_2 \cos [\pi(x-x_1)/\ell_1], & x_1+\ell_1/2 \le x \le x_1 + \ell_1, \\
-b_2, & x \ge x_1 + \ell_1,
\end{cases}
\]
with $b_1$ and $b_2$ chosen so that
\begin{equation}\label{ortho.psi1}
\int_0^\ell \psi_1 \rho\>dx = 0.
\end{equation}
This test function is illustrated in Figure~\ref{psi1test.fig}.

\begin{figure} 
\begin{center}  
\begin{overpic}[scale=.9]{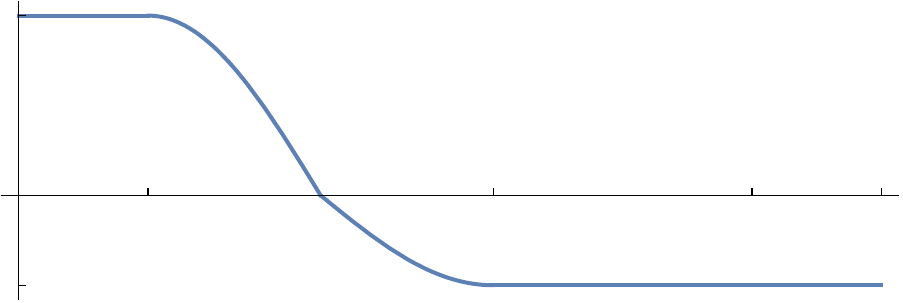} 
\put(15,9){$x_1$}
\put(53,9){$x_2$}
\put(97,8){$\ell$}
\end{overpic}
\end{center}
\caption{The test function $\psi_1$.}\label{psi1test.fig}
\end{figure}

The numerator of the Rayleigh quotient for $\psi_1$ is
\[
\int_0^\ell \abs{\psi_1'}^2 dx = \frac{\pi^2 k_1}{2\ell_1} (b_1^2 + b_2^2).
\]
For the denominator, we include only the contribution from pumpkin $j$ to obtain a lower bound,
\[
\int_0^\ell \abs{\psi_1}^2 dx \ge \frac{\ell_1k_1 }{2} ( b_1^2 +  b_2^2).
\]
By \eqref{pchain.rayleigh}, these estimates imply that
\begin{equation}\label{ell1.est}
\lambda_1 \le  \frac{\pi^2}{\ell_1^2}
\end{equation}

The next step is to construct an alternate test function by fitting a quarter-wavelength cosine into each of 
the two longest pumpkins, as illustrated in Figure~\ref{psi2test.fig}.
Without loss of generality, we can assume that $x_1 < x_2$ and define
\[
\psi_2(x) := \begin{cases} b_1, & x \le x_1, \\
b_1 \cos [\pi(x-x_1)/2\ell_2], & x_1\le x \le x_1 + \ell_2, \\
0, & x_1+\ell_2 \le x \le x_2, \\
-b_2 \sin [\pi(x - x_2)/2\ell_2], & x_2 \le x \le x_2+ \ell_2, \\
-b_2, & x \ge x_2 + \ell_2,
\end{cases}
\]
where once again $b_1$ and $b_2$ are chosen so that $\psi_2$ satisfies the orthogonality condition.

\begin{figure} 
\begin{center}  
\begin{overpic}[scale=.9]{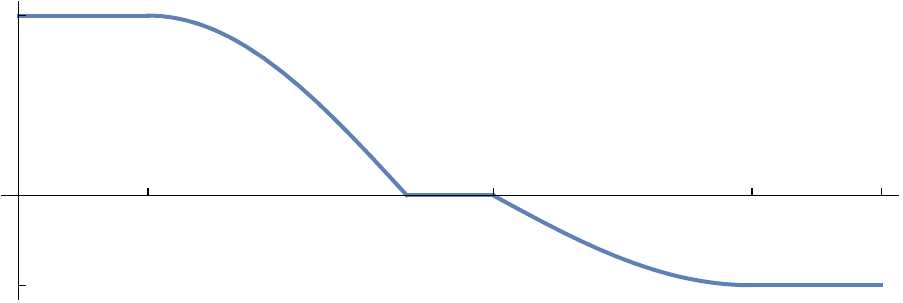} 
\put(15,9){$x_1$}
\put(53,9){$x_2$}
\put(97,8){$\ell$}
\end{overpic}
\end{center}
\caption{The test function $\psi_2$.} \label{psi2test.fig}
\end{figure}

For this second test function we compute
\[
\int_0^\ell \abs{\psi_2'}^2 dx = \frac{\pi^2}{8\ell_2} (k_1b_1^2 + k_2b_2^2),
\]
and estimate
\[
\int_0^\ell \abs{\psi_2}^2 dx \ge \frac{\ell_2}{2} (k_1b_1^2 + k_2b_2^2)
\]
By \eqref{lam1.rayleigh},
\begin{equation}\label{ell2.est}
\lambda_1 \le \frac{\pi^2}{4\ell_2^2}.
\end{equation}

To complete the argument, we note that the pumpkins $2,\dots, m$ have lengths at most $\ell_2$ and add up to a total length of $\ell-\ell_2$.
Thus the first two lengths satisfy an inequality,
\[
(m-1)\ell_2 \ge \ell-\ell_1.
\]
Plugging this into \eqref{ell2.est} gives
\begin{equation}\label{ell3.est}
\lambda_1 \le  \frac{\pi^2(m-1)^2}{4(\ell-\ell_1)^2}.
\end{equation}
In combination with \eqref{ell1.est}, we this have
\begin{equation}\label{opt.ell1.bnd}
\lambda_1 \le \pi^2 \left( \min\left\{\frac{1}{\ell_1}, \frac{m-1}{2(\ell-\ell_1)}\right\} \right)^2.
\end{equation}
The right-hand side attains a maximum when $\ell_1 = 2\ell/(m+1)$, yielding the claimed estimate.
\end{proof}

Theorem~\ref{main.thm} now follows immediately from Lemma~\ref{pchain.lemma} and Theorem~\ref{pchain.bnd.thm}.

\section{Sharpness of the estimate}\label{sharp.sec}

Let $G$ be a pumpkin chain with total length $\ell$ and $m$ components, as in \S\ref{pchain.bnd.sec}.  
Our goal in this section is to show that the bound in Theorem~\ref{pchain.bnd.sec} is sharp.  
From the estimate \eqref{opt.ell1.bnd}, it is clear that the maximum value of $\lambda_1$ 
should occur when $\ell_1 = 2\ell/(m+1)$ and $\ell_2 = \ell/(m+1)$.  Note that this implies that all
other segments also have length $\ell/(m+1)$.  

\begin{lemma}\label{sharp.lemma}
Suppose $G$ is a pumpkin chain with length $\ell$ and $m \ge 2$ components.  For $a := \ell/(m+1)$, 
assume that one segment has length $2a$ and all others have length $a$.  Given $\vep>0$, there is
a choice of edge multiplicities $k_1,\dots, k_m$ such that
\[
\lambda_1 > \frac{\pi^2}{4a^2} - \vep.
\]
\end{lemma}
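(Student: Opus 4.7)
My plan is to place the long pumpkin at one endpoint (say $p=1$) and take multiplicities in a geometric sequence, $k_j = N^{m-j}$ for a large integer $N$; every vertex ratio is then $k_j/k_{j+1}=N$, which keeps the transfer-matrix analysis uniform. I claim that the smallest positive eigenfrequency $\sigma_1(N)$ of the resulting chain tends to $\pi/(2a)$ as $N\to\infty$.

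Following the transfer-matrix setup from the proof of Theorem~\ref{pchain.bnd.thm}, write the eigenfunction on pumpkin $j$ as $A_j\cos(\sigma(x-y_{j-1})) + B_j\sin(\sigma(x-y_{j-1}))$. Starting from $(A_1,B_1)=(1,0)$ (Neumann at $x=0$), the continuity and current-balance conditions at each vertex give a recursion $(A_{j+1},B_{j+1})^T = M_j(A_j,B_j)^T$ in which every matrix $M_j$ carries its $N$-dependence only in the second row (the $\phi'$-continuity row). Imposing Neumann at $x=\ell$ then produces a scalar characteristic equation $F(u;N)=0$ in $u=a\sigma$, polynomial of degree $m-1$ in $N$.

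The central computation is to verify that the leading coefficient of $F(u;N)$ (the coefficient of $N^{m-1}$) is a positive multiple of $\sin u\,\cos^m u$. At each of the $m-1$ transitions one picks up a factor of $N$ from the second row, and tracking the accompanying trigonometric entries (using $\sin 2u=2\sin u\cos u$ at the long pumpkin) causes the $\cos$-powers to accumulate to $m$; this is most cleanly established by induction on $m$. Since $\sin u\cos^m u$ is strictly positive on $(0,\pi/2)$ and vanishes only at the endpoints, the leading term dominates $F(u;N)$ uniformly on any compact subinterval of $(0,\pi/2)$ for large $N$, ruling out roots there, and the explicit positivity of $F(u;N)/\sin u$ at $u=0$ rules out spurious small roots. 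Near $u=\pi/2$, balancing the leading $N^{m-1}\delta^m$ term against the subleading $N^{m-2}\delta^{m-2}$ term (with $\delta=\pi/2-u$) yields the smallest positive root $u^*(N)=\pi/2 - O(N^{-1/2})$. Hence $\lambda_1=(u^*(N)/a)^2\to \pi^2/(4a^2)$ as $N\to\infty$, and for any prescribed $\varepsilon>0$ one can take $N$ large enough that $\lambda_1 > \pi^2/(4a^2)-\varepsilon$. The main obstacle is the combinatorial bookkeeping required to pin down the leading coefficient for general $m$, and to verify that subleading-order cancellations do not produce an earlier crossing of $F(u;N)$ through zero that survives the $N\to\infty$ limit.
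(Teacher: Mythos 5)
Your route is genuinely different from the paper's. The paper fixes the target frequency $\sigma_1 = \pi/(2(a+\delta))$ \emph{first}, writes down the eigenfunction piece by piece with phase shifts $\eta_j$ that are small integer multiples of $\sigma_1\delta/2$, and then reads off the required multiplicities $k_j$ from the vertex conditions; a rational choice of $\delta$ (so that $\sin(\sigma_1\delta/2)$ and $\cos(\sigma_1\delta/2)$ are rational) forces the $k_j$ to be integers. That the constructed eigenfunction is actually $\phi_1$ is then proved by a direct Sturm-type comparison: any eigenfunction at frequency $\sigma<\sigma_1$, normalized to agree at $x=0$, is shown to dominate $\phi$ through the long segment and, by the reverse argument from $x=\ell$, to have no zero at all, contradicting orthogonality to constants. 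You instead fix the multiplicities ($k_j = N^{m-j}$) and study the secular determinant $F(u;N)$ as $N\to\infty$. Both are plausible strategies; the paper's buys you an explicit eigenfunction and a short, self-contained ``no zeros'' argument, while yours reduces sharpness to an asymptotic statement about roots of a polynomial in $N$.

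There are two things you should tighten. First, a small one: with the standard sign convention ($\phi'(\ell)=0$ read as $-A_m\sin u + B_m\cos u = 0$), the coefficient of $N^{m-1}$ comes out as a \emph{negative} multiple of $\sin u\,\cos^m u$, namely $-2\sin u\cos^m u$ (track the $B$-recursion: $B_2=-N\sin 2u$, $B_{j+1}^{\rm lead}=N\cos u\cdot B_j^{\rm lead}$). That does not change the root set, but since you invoke ``positivity of $F/\sin u$'' it is worth getting the sign consistent. Second, and more substantively, the root-exclusion step is where the real work lives, and your proposal does not yet close it. Showing that the leading term $\sin u\cos^m u$ dominates on compact subsets of $(0,\pi/2)$ controls $F$ only away from $0$ and $\pi/2$; the sign of $F/\sin u$ at $u=0$ controls only a shrinking neighborhood of $0$. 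You need a uniform statement: for all large $N$, the function $G(u;N):=F(u;N)/\sin u$ (which extends continuously to $u=0$) is bounded away from zero on an interval $[0,\pi/2-cN^{-1/2}]$ whose right endpoint creeps up to $\pi/2$ \emph{at the same rate} as your candidate root $u^*(N)$ creeps in. Since the subleading coefficients of $G$ (as a polynomial in $N$) are bounded trigonometric expressions and the leading one is $\cos^m u$, this is doable, but the matching of rates near $\pi/2$ is precisely the ``subleading cancellation'' issue you flag; it is not automatic and needs to be carried out. The paper avoids this entirely by never invoking asymptotics in $N$: the comparison argument excludes smaller eigenvalues for every admissible choice of $k_j$, not just in a limit.
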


\begin{proof}
For $\delta>0$ small, we will show how to choose edge multiplicities to produce an eigenvalue $\lambda = \sigma_1^2$ where 
\[
\sigma_1 = \frac{\pi}{2(a+\delta)}.
\]
Note that in an interval of length $a$, $\cos (\sigma_1 x)$ passes through just under a quarter period.  

Let $x_j$ denote the starting point of the $j$th pumpkin, for $j=1,\dots, m$.  Let $j_0$ denote the segment 
of length $2a$.  The strategy is to splice together functions of the form 
\begin{equation}\label{hj.segments}
h_j(x) := \begin{cases} \cos (\sigma_1(x - x_j) + \eta_j), &1 \le j \le j_0, \\
\sin (\sigma_1(x - x_j) + \eta_j), &j_0 < j \le m. \end{cases}
\end{equation}
where each phase shift $\eta_j$ is an integer multiple of $\sigma_1 \delta/2$.  We will specify the phase shifts,
and then use the vertex conditions to fix the multiplicities $k_j$.

The recipe for choosing $\eta_j$ is as follows.  
If $j \ne j_0$, then we set 
\[
\eta_j := \begin{cases} 0, &\text{if }j = 1, \\
\sigma_1 \delta/2, &\text{if }1<j<m, \\
\sigma_1 \delta, &\text{if } j=m.  \end{cases}
\]
For the segment of double length, $j = j_0$, the phase shifts are doubled,
\[
\eta_{j_0} := \begin{cases} 0, &\text{if }j_0 = 1, \\
\sigma_1 \delta, &\text{if }1<j_0<m, \\
2\sigma_1 \delta, &\text{if } j_0=m.  \end{cases}
\]

The full eigenfunction $\phi$, is defined by setting
\[
\phi(x) := b_j h_j(x), \quad\text{for }x \in [x_j, x_j + \ell_j],
\]
with $b_j$ defined by vertex and continuity conditions.  The matching conditions at vertex $x_j$ are
\begin{equation}\label{bj.cond}
\begin{split}
b_{j-1} h_{j-1}(x_j) &= b_j h_j(x_j), \\
b_{j-1}k_{j-1} h'_{j-1}(x_j) &= b_jk_j h'_j(x_j),
\end{split}
\end{equation}
for $j=2, \dots, m$.  Hence the edge multiplicities satisfy the condition
\begin{equation}\label{kj.cond}
k_{j-1} h'_{j-1}(x_j)h_j(x_j) = k_j h'_j(x_j)h_{j-1}(x_j), \quad 2 \le j \le m.
\end{equation}
If we choose $\delta$ so that $\sin(\sigma_1 \delta/2)$ and $\cos(\sigma_1 \delta/2)$ are both rational, then by
basic trig identities, all of the values of $h$ and $h'/\sigma$ appearing in \eqref{kj.cond} will be rational.  Hence
there exist integers $k_1,\dots, k_m$ satisfying the relation.
We can find arbitrarily small values of $\delta$ satisfying the rational condition by choosing a large integer $n$ and
setting
\[
\frac{\sigma_1\delta}2 = \frac{\pi \delta}{4(a+\delta)} = \arctan \left(\frac{2n}{n^2-1}\right).
\]

\begin{figure} 
\begin{center}  
\begin{overpic}[scale=.9]{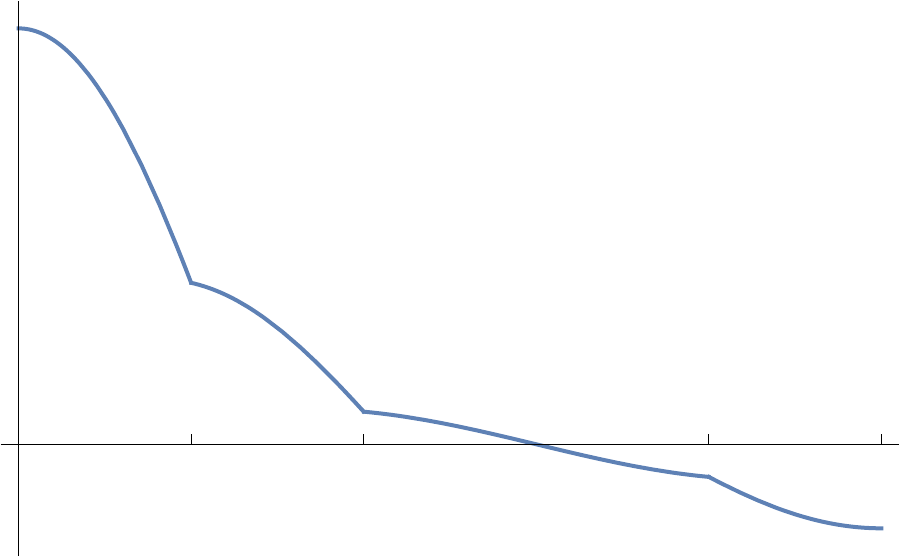} 
\put(20,10){$x_1$}
\put(39,10){$x_2$}
\put(77,10){$x_3$}
\end{overpic}
\end{center}
\caption{A sample constructed eigenfunction $\phi$.} \label{phiplot.fig}
\end{figure}

Assume that $\delta$ is chosen so that $\sigma_1 \delta < \pi/8$,  
After choosing the corresponding integers $k_1,\dots, k_m$ satisfying \eqref{kj.cond}.
we can then solve the coefficient equations \eqref{bj.cond} to construct an eigenfunction $\phi$ with eigenvalue $\lambda = \sigma_1^2$.  
One can easily check that under the assumption that $\sigma_1 \delta < \pi/8$, $\phi$ will be strictly decreasing, with a single zero that
occurs at the midpoint of the double-length segment, $x_{j_0}+a$.  An example is illustrated in Figure~\ref{phiplot.fig}, with $m=4$ and $j_0=3$.

To prove that $\phi$ corresponds to the eigenvalue $\lambda_1$, one approach would be to adapt the classical Sturm-Liouville theorem, 
which says that the $n$th eigenfunction has exactly $n$ zeros if $\rho \in C^1[0,\ell]$.  For convenience, we give a direct 
argument.  An eigenfunction $\psi$ with $0< \sigma < \sigma_1$ must start with a segment 
proportional to $\cos (\sigma x)$ at $x=0$.  Furthermore, since the wavelength associated to $\sigma$ is longer, the segments $h_j$ for $j \le j_0$
take the same form as \eqref{hj.segments}, with $\sigma_1$ replaced by $\sigma$ and with different choices of the phase shifts $\eta_j$.  if we rescale 
$\psi$ so that $\psi(0) = \phi(0)$, then since the matching conditions \eqref{bj.cond} are the same for both functions, we can deduce that $\psi> \phi$
through segment $j_0$.  In particular, $\psi$ has no zero for $0 \le x \le x_{j_0}$.  Applying the same reasoning in reverse, starting from $x=\ell$,
shows that $\psi$ has no zeros for $x_{j_0} \le x \le \ell$ either.  This is a contradiction, since $\psi$ is continuous and orthogonal to constant functions.
Therefore, the eigenfunction $\phi_1$ corresponds to the lowest nonconstant eigenfunction.  We have thus produced a pumpkin chain with
\[
\lambda_1 = \frac{\pi^2}{4(a+\delta)^2},
\]
where $\delta$ is arbitrarily small.
\end{proof}

In the $m=4$ case pictured in Figure~\ref{phiplot.fig}, it is easy to work out that the choices of $k_j$ indicated in the construction 
from the lemma are proportional to the values $\{1,N,N^2,2N\}$,  for some large number $N$.  This gives a relatively easy way to construct
optimal examples in this case.  For example, taking $k_1=1$, $k_2 = 10^{10}$, $k_3 = 10^{20}$, and $k_4 = 2\times 10^{10}$ gives the first eigenvalue
$\sigma_1 \doteq 2.49998\pi$, very close to the optimal value of $5\pi/2$.



\end{document}